\newcommand{\rk}{{\rm rk}}
\newtheorem{lemma1}{}[section]
\newenvironment{lemma}{\begin{lemma1}{\bf Lemma.}}{\end{lemma1}}
\newenvironment{theorem}{\begin{lemma1}{\bf Theorem.}}{\end{lemma1}}
\newenvironment{corollary}{\begin{lemma1}{\bf Corollary.}}{\end{lemma1}}
\newenvironment{remark}{\begin{lemma1}{\bf Remark.}\rm}{\end{lemma1}}
\newenvironment{conjecture}{\begin {lemma1}{\bf Conjecture.}}{\end{lemma1}}
\newenvironment{remark*}{{\bf Remark.}}{}
\newenvironment{example*}{{\bf Example.}}{}
\newcommand{\Q}{\ensuremath{\mathbb{Q}}}
\newcommand{\C}{\ensuremath{\mathbb{C}}}
\newcommand{\N}{\ensuremath{\mathbb{N}}}
\newcommand{\PP}{\ensuremath{\mathbb{P}}}
\newcommand{\merom}[3]{\ensuremath{#1:#2 \dashrightarrow #3}}
\newcommand{\holom}[3]{\ensuremath{#1:#2  \rightarrow #3}}
\newcommand{\fibre}[2]{\ensuremath{#1^{-1} (#2)}}
\newcommand\sF{{\mathcal F}}
\newcommand\sO{{\mathcal O}}
\DeclareMathOperator*{\red}{red}
\DeclareMathOperator*{\nons}{nons}
\DeclareMathOperator*{\Alb}{Alb}
\title{Manifolds with nef cotangent bundle} 
\date{November 15, 2011}
\author{Andreas H\"oring}
\subjclass[2000]{14F10, 14D06, 14E30, 14J40, 32J27}
\thanks{Partially supported by the A.N.R. project ``CLASS''}
\address{Andreas H\"oring, Universit\'e Pierre et Marie Curie and Albert-Ludwig Universit\"at Freiburg}
\email{hoering@math.jussieu.fr}
\begin{document}

\begin{abstract}
Generalising a classical theorem by Ueno,
we prove structure results for manifolds with nef or semiample cotangent bundle.
\end{abstract}

\maketitle

\section{Introduction}

If $X$ is a submanifold of a complex torus, then by a classical
result of Ueno \cite[Thm.10.9]{Uen75} the manifold $X$ is an analytic fibre bundle with fibre a torus $T$
over a manifold $Y$ with ample canonical bundle. Moreover if $X$ is projective, then it decomposes (after 
finite \'etale cover) as a product $Y \times T$. Since for a submanifold of a complex torus the
cotangent bundle $\Omega_X$ is globally generated, it is natural to ask if there are analogues
of Ueno's result under a weaker positivity assumption.
Generalising a conjecture by Yau on compact K\"ahler manifolds with nonpositive bisectional curvature, 
Wu und Zheng  \cite{WZ02} proposed the following problem.

\begin{conjecture} \label{conjecturenef}
Let $X$ be a compact K\"ahler manifold with nef cotangent bundle $\Omega_X$. Then there exists a finite \'etale cover
$X' \rightarrow X$ such that the Iitaka fibration $X' \rightarrow Y'$ is a smooth fibration
onto a projective manifold $Y$ with ample canonical bundle and all the fibres are complex tori.
\end{conjecture}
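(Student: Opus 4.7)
My plan is to follow the template of Ueno's original argument, replacing global generation of $\Omega_X$ by nefness at every step, and to proceed in four stages.

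First, I need the Iitaka fibration to exist as a holomorphic map. Nefness of $\Omega_X$ immediately gives that $K_X = \det \Omega_X$ is nef, but in the Kähler category nefness does not imply semi-ampleness; this is exactly the abundance conjecture. I would either take semi-ampleness of $K_X$ as an input, or work under the stronger hypothesis that $\Omega_X$ itself is semi-ample (as the abstract of the paper suggests), to produce the Iitaka fibration $f : X \to Y$ as a morphism with connected fibres onto a normal projective variety $Y$ of dimension $\kappa(X)$.

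Second, I would analyse the general fibre $F$ of $f$. On $F$ the canonical class $K_F = K_X|_F$ is numerically trivial, while $\Omega_X|_F$ remains nef. By the theorem of Demailly--Peternell--Schneider, a nef vector bundle with numerically trivial determinant on a compact Kähler manifold is numerically flat. The conormal sequence
\[
0 \longrightarrow N_{F/X}^{\ast} \longrightarrow \Omega_X|_F \longrightarrow \Omega_F \longrightarrow 0
\]
then exhibits $\Omega_F$ as a quotient of a numerically flat bundle, hence numerically flat. A compact Kähler manifold with numerically flat cotangent bundle has vanishing rational Chern classes and is, via the Beauville--Bogomolov decomposition, finitely étale-covered by a complex torus.

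Third, I would upgrade this fibrewise picture to a global statement about $f$ itself. Using the relative cotangent sequence
\[
0 \longrightarrow f^{\ast} \Omega_Y \longrightarrow \Omega_X \longrightarrow \Omega_{X/Y} \longrightarrow 0 ,
\]
the nefness of $\Omega_X$ should force $\Omega_{X/Y}$ to be nef with numerically trivial determinant on every fibre, which together with upper semi-continuity rules out singular fibres and yields that $f$ is smooth with torus fibres. The family is then controlled by a period map into an arithmetic quotient; rigidity coming from nefness of $\Omega_X$ forces this period map to be constant, so $f$ becomes an étale-locally trivial torus bundle and, after a further finite étale base change, a product, as in Ueno. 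On the base, $K_Y$ is big by construction of the Iitaka fibration, and pushforward arguments should give enough positivity on $\Omega_Y$ to upgrade big to ample.

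The principal obstacle is the very first stage: without abundance for nef $K_X$ on an arbitrary compact Kähler manifold, the Iitaka fibration need not exist as a morphism, and this is presumably why the statement remains a conjecture while the paper treats the (stronger) semi-ample case. A secondary difficulty is ruling out singular fibres of $f$ and establishing isotriviality with only the hypothesis of nef $\Omega_X$; both steps use numerical flatness on the general fibre in a decisive way, and transporting this to every fibre requires a careful combination of semi-continuity with the DPS structure theorem.
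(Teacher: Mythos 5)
Your overall framing is right: without abundance for nef $K_X$ the Iitaka fibration need not exist as a morphism, and this is exactly why the paper proves this statement only under extra hypotheses (projective with $K_X$ semiample, Theorem~\ref{theoremmain}; compact K\"ahler of dimension at most three, Theorem~\ref{theoremkaehler}), leaving the full conjecture open. Your analysis of the general fibre $F$ also matches the paper's Step~1 of Lemma~\ref{lemmaalmostsmooth}: $\Omega_X|_F$ is numerically flat, hence $c_1(F)=c_2(F)=0$, and $F$ is a finite \'etale quotient of a torus (the paper cites Bieberbach via \cite{Kob87} rather than Beauville--Bogomolov, but the conclusion is the same).

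The third stage, however, contains both a false claim and a substantive gap. First, the assertion that the period map must be constant, so that $f$ becomes a locally trivial torus bundle and eventually a product ``as in Ueno,'' is simply wrong under the hypothesis that $\Omega_X$ is merely nef: the paper explicitly exhibits counterexamples (universal families over compact curves in moduli of abelian varieties with level structure). The conjecture asks only for smoothness, not isotriviality, and isotriviality is recovered only under the stronger hypothesis that $\Omega_X$ itself is semiample (Theorem~\ref{theoremsemiample}). Second, your mechanism for excluding singular fibres does not function as stated: the relative cotangent sequence $0 \to f^*\Omega_Y \to \Omega_X \to \Omega_{X/Y} \to 0$ is \emph{not} an exact sequence of vector bundles near a non-submersive point, so nefness of $\Omega_X$ gives you no control of $\Omega_{X/Y}$ there, and ``upper semi-continuity'' alone does not rule out degenerate fibres. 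The paper has to work considerably harder: it first establishes equidimensionality of $\varphi$ via Kawamata \cite[Thm.2]{Kaw91} and Sol\'a Conde--Wi\'sniewski \cite[Thm.4.4]{SW04} (using the absence of rational curves in fibres), then proves almost-smoothness by a two-pronged argument --- a dualising-sheaf computation showing each bad fibre must be irreducible, followed by a foliation-regularity argument showing $\det\sF \to \bigwedge^{\rk\sF}T_X$ is nowhere vanishing in the spirit of \cite[Lemma~1.20]{DPS94} --- and finally eliminates \emph{multiple} fibres (a separate obstruction you do not address) via Koll\'ar's and Nakayama's theory of torus fibrations, using in the projective case a section of the abelian group scheme. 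The ampleness of $K_Y$ also comes from a different route than the one you propose: $Y$ is of general type by \cite[5.9.1]{Kol93}, and it contains no rational curves because any such curve would lift to $X$ along a section of the group scheme, contradicting nefness of $\Omega_X$.
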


In this note we prove this conjecture for projective manifolds with semiample canonical bundle, i.e.
some positive multiple $mK_X$ is generated by its global sections. 

\begin{theorem} \label{theoremmain}
Let $X$ be a projective manifold with nef cotangent bundle $\Omega_X$
and semiample canonical bundle $K_X$.
Then Conjecture \ref{conjecturenef} holds for $X$.
\end{theorem}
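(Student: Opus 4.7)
The plan is to analyse the Iitaka fibration of $X$ and show it realises, up to finite \'etale cover, the structure claimed by Conjecture~\ref{conjecturenef}. Since $K_X$ is semiample, the Iitaka fibration $f\colon X\to Y$ is a morphism with connected fibres onto a normal projective variety, and some positive multiple $mK_X=f^*A$ with $A$ ample on $Y$. A smooth fibre $F$ then satisfies $K_F\equiv 0$ by adjunction, while $\Omega_F$ is nef as a quotient of $\Omega_X|_F$. The classification of K\"ahler manifolds with nef cotangent bundle and numerically trivial canonical class---combining Fulton--Lazarsfeld Schur-positivity of nef bundles with the equality case of the Miyaoka--Yau inequality when $c_1=0$, to conclude numerical flatness of $\Omega_F$ and then Hermitian flatness by Demailly--Peternell--Schneider---identifies such an $F$, up to finite \'etale cover, with an abelian variety.

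The central step, and the main obstacle, is to prove that $f$ is a smooth morphism. The decisive input is that $X$ contains \emph{no rational curves}: for any non-constant $\varphi\colon\PP^1\to X$ the differential $\varphi^*\Omega_X\to\Omega_{\PP^1}=\O(-2)$ is surjective on a dense open set, so its image is a line sub-quotient of the nef bundle $\varphi^*\Omega_X$ of degree at most $-2$, which is absurd. For a fibration whose general fibre is an abelian variety and whose total space is smooth projective, every non-smooth reduced fibre must carry a rational component (by semistable reduction for abelian varieties and the analysis of their degenerations); hence the no-rational-curves property forces every reduced fibre of $f$ to be smooth abelian-type. The remaining scheme-theoretic pathology, namely multiple fibres, is detected by the failure of $f^*\Omega_Y\hookrightarrow\Omega_X$ to be a subbundle on the multiplicity locus and is eliminated after a finite \'etale cover constructed from the monodromy action of $\pi_1(Y^\circ)$ on the torsion subgroups of general fibres. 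This is the part of the argument requiring the most care.

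Once $f$ is smooth, $Y$ is smooth because a smooth surjection with smooth source has smooth target, and the relative canonical $K_{X/Y}=K_X-f^*K_Y$ is trivial on every fibre, so $K_{X/Y}\equiv 0$ and $f^*K_Y\equiv K_X\equiv f^*(A/m)$; surjectivity of $f$ and ampleness of $A$ then give that $K_Y$ is ample. Finally, to turn the \'etale-quotient fibres $F_y=T_y/G_y$ into honest complex tori, one trivialises the finite quotients $G_y$ globally: they correspond to a finite-index subgroup of $\pi_1(F)$, and intersecting the finitely many monodromy translates under $\pi_1(Y)$ yields a normal finite-index subgroup of $\pi_1(X)$, hence a finite \'etale cover $X'\to X$ whose Iitaka fibration $X'\to Y'$ is smooth with abelian-variety fibres and ample canonical base, as predicted by Conjecture~\ref{conjecturenef}.
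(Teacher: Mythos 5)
Your overall skeleton (Iitaka fibration, abelian smooth fibres forced by nef $\Omega$ and $K_F\equiv 0$, ``no rational curves'' as the engine) matches the paper's strategy, but several decisive steps are either missing or incorrect.

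First, you never address \emph{equidimensionality}. Before you can speak of ``non-smooth reduced fibres'' and degenerations of abelian varieties, you must rule out fibres of excess dimension. In the paper this is exactly where Kawamata's theorem \cite[Thm.2]{Kaw91} enters: since $K_X$ is $\varphi$-numerically trivial (so in particular $-K_X$ is $\varphi$-nef) and the fibres contain no rational curves, the exceptional locus of $\varphi$ is empty. Without this step, your ``semistable reduction'' heuristic has nothing to bite on. Second, the semistable-reduction claim itself is not a proof: semistable reduction is applied after a finite base change and a birational modification, and rational curves in the resulting model need not descend to the original $X$; the paper instead uses the $\varphi$-numerical flatness of $T_X$, the Chern-class inequality $0=c_1^2\geq c_2\geq0$ on reduced fibres, DPS's analysis of normal bundles of fibres, and a foliation regularity argument to get almost smoothness and then invokes Koll\'ar's \cite[Thm.6.3]{Kol93} to obtain, after \'etale cover, an abelian group scheme. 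The existence of a section of this group scheme is what kills the multiple fibres---a much cleaner mechanism than your vague appeal to monodromy on torsion subgroups, which you do not connect to the multiplicity locus.

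Third, and most seriously, your argument for ampleness of $K_Y$ is wrong. You assert that $K_{X/Y}$ ``is trivial on every fibre, so $K_{X/Y}\equiv 0$.'' This implication is false: any nontrivial pullback $f^*L$ is trivial on every fibre. In fact, for a nonisotrivial smooth abelian fibration $K_{X/Y}=f^*\det f_*\omega_{X/Y}$ is generally a nonzero nef class, so $K_Y$ is \emph{not} numerically $A/m$. The paper circumvents this entirely: by \cite[5.9.1]{Kol93} the base $Y$ of an abelian group scheme with $X$ of nonnegative Kodaira dimension is of general type, and for a general-type manifold nefness plus the absence of rational curves (obtained here by lifting any $\PP^1\to Y$ through the group-scheme section to $X$, where it is forbidden by nefness of $\Omega_X$) already gives ampleness of $K_Y$. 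You would need to replace your $K_{X/Y}\equiv 0$ claim with an argument along these lines.
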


Since $K_X = \det \Omega_X$ is nef, the abundance conjecture \cite[Sec.2]{Rei85}  
claims that the semiampleness condition is redundant. So far this conjecture is known to hold if
$\dim X\leq 3$; see \cite{Uta92}. Note however that 
a projective manifold with nef cotangent bundle does not contain any rational curves, so the
abundance conjecture reduces to the weaker nonvanishing conjecture \cite[Thm.1.5]{a16}.
In particular our statement holds for fourfolds with $\kappa(X) \geq 0$. 

For manifolds with nonpositive bisectional curvature one expects the torus fibration
to be locally trivial \cite[p.264]{WZ02}. This is no longer true if we assume only that $\Omega_X$ is nef:
universal families over compact curves in the moduli space of abelian varieties (polarised and with level three structure) provide immediate counter-examples.
However if we assume that the cotangent bundle $\Omega_X$ itself is semiample we obtain
a precise analogue of Ueno's theorem:

\begin{theorem} \label{theoremsemiample}
Let $X$ be a projective manifold with semiample cotangent bundle, i.e. for some
positive integer $m \in \N$, the symmetric product $S^m \Omega_X$ is globally generated. 
Then there exists a finite \'etale cover
$X' \rightarrow X$ such that $X' \simeq Y \times A$ where $Y$ has ample canonical bundle and $A$ is an abelian variety.
\end{theorem}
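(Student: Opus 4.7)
The plan is to first invoke Theorem \ref{theoremmain} to produce the smooth abelian-variety fibration, and then to use the stronger hypothesis that $\Omega_X$ itself (rather than just $K_X$) is semiample to force the Kodaira--Spencer class of this fibration to vanish; a standard isotriviality argument then yields the product structure after a further finite étale cover. Since semiampleness of $\Omega_X$ implies both nefness of $\Omega_X$ and semiampleness of $K_X$, Theorem \ref{theoremmain} applies, and after a finite étale cover of $X$ we obtain a smooth morphism $f\colon X \to Y$ onto a projective manifold with $K_Y$ ample, whose fibres $F$ are abelian varieties of some fixed dimension $g$.

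The key step is to show that $\Omega_X|_F$ is trivial for every fibre $F$. Since $\Omega_F \cong \sO_F^g$, the relative cotangent sequence $0 \to \sO_F^{\dim Y} \to \Omega_X|_F \to \Omega_F \to 0$ exhibits $\Omega_X|_F$ as an iterated extension of copies of $\sO_F$; the same is therefore true of every symmetric power $S^m\Omega_X|_F$. A unipotent flat bundle on the abelian variety $F$ corresponds to a unipotent representation $\rho\colon \pi_1(F) \to \mathrm{GL}(V)$, and its space of global holomorphic sections is exactly the $\rho$-invariant subspace $V^\rho$ of $V$; global generation of such a bundle is thus equivalent to $V^\rho = V$, that is, to triviality of $\rho$. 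Restricting the surjection $H^0(X, S^m\Omega_X) \otimes \sO_X \twoheadrightarrow S^m\Omega_X$ to $F$ shows that $S^m\Omega_X|_F$ is globally generated, so $S^m\rho$ is trivial. But a non-trivial unipotent matrix has a Jordan block of size at least two, and such a block persists under $S^m$, so $S^m\rho$ trivial forces $\rho$ itself to be trivial. Consequently $\Omega_X|_F$ is trivial.

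Triviality of $\Omega_X|_F$ on every fibre is equivalent to the vanishing of the Kodaira--Spencer map $T_{Y,y} \to H^1(F, T_F)$ at every point $y \in Y$, so the smooth abelian-variety family $f$ is isotrivial. Polarising the fibres by the restriction of an ample class from $X$ and passing to a finite étale cover of $Y$ imposing a level structure, the classifying map to the fine moduli space of polarised abelian varieties with level structure has zero differential and is therefore constant; hence all fibres are canonically identified with a single abelian variety $A$, and $f$ becomes a torsor for $A$ over $Y$. As in the proof of Ueno's theorem \cite[Thm.10.9]{Uen75}, an isogeny of $A$ annihilating the class of this torsor provides a final finite étale cover after which $X' \simeq Y \times A$. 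The main technical hurdle is the argument in the second paragraph, specifically the descent from triviality of $S^m\Omega_X|_F$ to triviality of $\Omega_X|_F$, which rests on the faithfulness of the symmetric-power functor on unipotent representations; the remaining steps are standard consequences of the vanishing of Kodaira--Spencer together with Ueno's torsor-trivialisation argument.
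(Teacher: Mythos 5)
Your overall strategy coincides with the paper's: invoke Theorem \ref{theoremmain} to get a smooth abelian fibration, show that the restriction $\Omega_X|_F$ to a fibre is trivial (equivalently, that the relative cotangent sequence splits), deduce that the Kodaira--Spencer map vanishes so the family is isotrivial, and then produce the product after a further \'etale cover. The paper handles the splitting step by a direct citation of Fujiwara \cite[Cor.4]{Fuj92}, and the final step by Lemma \ref{lemmaalmostsmooth}(4) via the Albanese map and Poincar\'e reducibility rather than via a torsor-trivialising isogeny.

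There is, however, a genuine gap in your second paragraph. The assertion that for a unipotent flat bundle $E_\rho$ on an abelian variety one has $H^0(F,E_\rho)=V^\rho$ is false in general: the correspondence between unipotent bundles and unipotent representations of $\pi_1(F)$ is not a bijection, because the holomorphic structure only sees the $H^{0,1}$ component of the cocycle $\chi\in H^1(F,\C)=\Hom(\pi_1(F),\C)$. Concretely, take $F$ an elliptic curve, $\chi=dz\in H^{1,0}(F)$, and the rank-two unipotent representation $\rho(\lambda)=\begin{pmatrix}1&\chi(\lambda)\\0&1\end{pmatrix}$; then $V^\rho$ is one-dimensional, but the underlying holomorphic bundle is the trivial bundle $\sO_F^{\oplus 2}$ (with holomorphic frame $(1,0)$ and $(z,1)$), so $h^0=2$. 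Thus from global generation of $S^m\Omega_X|_F$ one can only conclude that the holomorphic bundle $S^m\Omega_X|_F$ is trivial, not that the chosen $S^m\rho$ is trivial, and the reduction ``$S^m\rho$ trivial $\Rightarrow$ $\rho$ trivial'' does not apply as stated. One can still prove directly that a unipotent bundle on a torus whose $m$-th symmetric power is globally generated must itself be trivial, for instance via the inequality $h^0(E)\geq\rk E$ for a globally generated bundle with $c_1=0$ together with an induction on the filtration; but that is precisely the content of Fujiwara's result, and the clean route is simply to cite it as the paper does. The concluding torsor argument also needs the observation that the class in $H^1(Y,A)$ is torsion, which is not automatic; the paper sidesteps this by appealing to Lemma \ref{lemmaalmostsmooth}(4).
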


This generalises a theorem of Fujiwara \cite[Thm.II]{Fuj92}.

While many of our arguments also work for compact K\"ahler manifolds, a crucial tool
is a theorem of Kawamata \cite[Thm.2]{Kaw91} which allows us to exclude the existence of higher-dimensional fibres.
In low dimension an elementary argument works also in the K\"ahler case (cf. Lemma \ref{lemmakawamatakaehler}), so we obtain:

\begin{theorem} \label{theoremkaehler}
Let $X$ be a compact K\"ahler manifold with nef cotangent bundle. If $\dim X \leq 3$, then 
Conjecture \ref{conjecturenef} holds for $X$.
\end{theorem}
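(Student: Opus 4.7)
The plan is to run the proof of Theorem~\ref{theoremmain} in the K\"ahler setting, replacing its one genuinely projective ingredient --- Kawamata's theorem \cite[Thm.2]{Kaw91} --- by Lemma~\ref{lemmakawamatakaehler}; in dimension at most three the remaining steps work equally well for compact K\"ahler manifolds.

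First I would set up the Iitaka fibration. Since $\Omega_X$ is nef, $K_X = \det \Omega_X$ is nef. As noted in the introduction, $X$ carries no rational curves (a non-constant map $\PP^1 \to X$ would produce a quotient of degree $-2$ on the pulled-back cotangent bundle, contradicting nefness), so the abundance conjecture for $K_X$ reduces to nonvanishing $\kappa(X) \geq 0$, which is known for K\"ahler manifolds of dimension at most three. Hence $K_X$ is semiample, and the Iitaka fibration $f\colon X \to Y'$ is a holomorphic surjection onto a normal projective variety $Y'$ with $\dim Y' = \kappa(X)$.

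Next I would analyse a general fibre $F$ of $f$: it satisfies $K_F \equiv 0$ and inherits the nef cotangent bundle $\Omega_F = \Omega_X|_F$. Combining the Beauville--Bogomolov decomposition of Ricci-flat K\"ahler manifolds with the absence of rational curves in $F$ --- which forbids irreducible holomorphic symplectic and strict Calabi--Yau factors in the universal cover --- one concludes that, after a finite \'etale cover, $F$ is a complex torus.

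The crucial step, and the main obstacle, is to show that $f$ has no higher-dimensional fibres, so that $f$ is equidimensional. This is exactly the content of Lemma~\ref{lemmakawamatakaehler} in dimension $\leq 3$, which substitutes for Kawamata's theorem in the proof of Theorem~\ref{theoremmain}. Once equidimensionality is established, nefness of $\Omega_X|_{F}$ together with triviality of $K_{X/Y'}$ along each fibre forces $f$ to be a smooth morphism with all fibres complex tori; the base $Y'$ is then smooth with ample canonical bundle, possibly after replacing $X$ by a suitable finite \'etale cover, as at the end of the proof of Theorem~\ref{theoremmain}. This verifies Conjecture~\ref{conjecturenef} in the K\"ahler case for $\dim X \leq 3$.
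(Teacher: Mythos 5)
Your overall strategy is in line with the paper's: use abundance in low dimension to get the Iitaka fibration, replace Kawamata's projective theorem by Lemma~\ref{lemmakawamatakaehler} to get equidimensionality, and then apply the structure theory for fibrations with $\varphi$-numerically flat tangent bundle. But there are two genuine gaps.

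First, you write that once equidimensionality is established, nefness of $\Omega_X|_F$ and $\varphi$-triviality of $K_X$ ``forces $f$ to be a smooth morphism with all fibres complex tori.'' This skips all of the content of Lemma~\ref{lemmaalmostsmooth}, and moreover the part of that lemma that would give you smoothness directly --- part~3), via Koll\'ar's abelian group scheme --- is stated only for projective $X$ and is not available here. In the K\"ahler setting, what Lemma~\ref{lemmaalmostsmooth} gives you after an \'etale cover is only almost smoothness and smoothness \emph{in codimension one} (parts~1 and~2). To upgrade this to a genuine smooth torus fibration you need an extra input. The paper handles this by splitting on $\dim Y$: when $\dim Y=1$, smoothness in codimension one \emph{is} smoothness, since a codimension-two subset of a curve is empty; when $\dim Y=2$, the fibres are curves, so one observes that complete families of elliptic curves are isotrivial, hence $\varphi$ has generically constant moduli, and then part~4) of the lemma applies. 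Without this case analysis and the isotriviality observation, the passage from equidimensionality to smoothness is unjustified.

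Second, Lemma~\ref{lemmakawamatakaehler} is stated only for a threefold fibred over a projective \emph{surface}, i.e.\ the case $\dim Y = 2$. You invoke it as a blanket substitute for Kawamata's theorem, but you should note explicitly that for $\dim Y\le 1$ equidimensionality is automatic (over a curve every fibre is a divisor) and for $\dim Y=3$ the Iitaka map is generically finite with no positive-dimensional fibres to worry about, so only the $\dim Y=2$ case actually requires the lemma. On the other hand, your Beauville--Bogomolov argument for identifying the general fibre is a legitimate alternative to the paper's route (which uses the Chern class inequality $0=c_1^2\geq c_2\geq 0$ and Bieberbach's theorem), though the cleaner way to exclude K3/Enriques factors here is nefness of $\Omega_F$ rather than absence of rational curves; also be aware that the semiampleness of $K_X$ for K\"ahler threefolds is due to Peternell and Demailly--Peternell, which is what the paper cites, rather than a ``nonvanishing plus no rational curves'' reduction.
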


This improves a result of Kratz \cite[Thm.1]{Kra97}. 

On a technical level the key point is that in our situation the tangent bundle is numerically flat
with respect to the Iitaka fibration. This allows to combine techniques used by Demailly, Peternell and Schneider 
in the study of manifolds with nef tangent bundles \cite{DPS94} with those introduced by Koll\'ar \cite{Kol93}
and Nakayama \cite{Nak99b} to understand torus fibrations.

{\bf Acknowledgements.} I want to thank Simone Diverio, Noburo Nakayama, Thomas Peternell and Maxime Wolff for helpful discussions.

\begin{center}
{\bf Notation}
\end{center}

We work over the complex field $\C$. For  positivity notions of vector bundles on compact K\"ahler
and projective varieties we refer to \cite{DPS94} and \cite{Laz04b}. 

A fibration is a proper surjective morphism \holom{\varphi}{X}{Y} with connected fibres
from a complex manifold onto a normal complex variety $Y$. 
We say that the fibration $\varphi$
\begin{itemize}
\item is almost smooth if for every $y \in Y$ the reduction $F_{\red}$ of the
fibre $F:=\fibre{\varphi}{y}$ is smooth and has the expected dimension;
\item is smooth in codimension one if there exists an analytic subset of codimension at
least two such that $(X \setminus \fibre{\varphi}{Z}) \rightarrow (Y \setminus Z)$ is a smooth fibration; 
\item has generically constant moduli if there exists a manifold $F_0$ such that every generic
fibre $F$ is isomorphic to $F_0$. By a theorem of Fischer and Grauert \cite{FG65} this is equivalent 
to the property that $\varphi$ is locally trivial over some Zariski open set.
\end{itemize}

If \holom{\varphi}{X}{Y} is a fibration and $\mu: X' \rightarrow X$ a finite \'etale cover, 
there exists a fibration $\holom{\varphi'}{X'}{Y'}$ and a finite map
$\holom{\mu'}{Y'}{Y}$ such that $\varphi \circ \mu=\mu' \circ \varphi'$.
Since we never consider $\holom{\mu'}{Y'}{Y}$ we call the fibration $\holom{\varphi'}{X'}{Y'}$
the Stein factorisation (of $\varphi$ and $\mu$).

\section{A structure result for fibrations}

Recall that a vector bundle $E$ on a compact K\"ahler variety is numerically flat \cite[Defn.1.17]{DPS94}
if both $E$ and $E^*$ are nef. This is equivalent to the property that $E$ is nef and $\det E$ is numerically trivial,
i.e. $c_1(E)=0$. 

If \holom{\varphi}{X}{Y} is a fibration from a K\"ahler manifold onto a normal variety and $E$ a vector bundle on $X$,
we say that $E$ is $\varphi$-nef (resp. $\varphi$-numerically flat) if this property holds for any variety $Z \subset Y$
that is contracted by $\varphi$, i.e. such that $\varphi(Z)=pt$.
We note that if the cotangent bundle $\Omega_X$ is $\varphi$-nef, 
then any subvariety $Z \subset X$ contracted by $\varphi$ has nef cotangent sheaf: 
indeed $\Omega_Z$ is a quotient of $\Omega_X|_Z$, so it is nef.
Moreover in this case $\varphi$ does not contract any rational curves: if $f: \PP^1 \rightarrow X$
is a non-constant morphism such that $\varphi \circ f$ is constant, 
the tangent map gives a non-zero map $f^* \Omega_X \rightarrow \Omega_{\PP^1} \simeq \sO_{\PP^1}(-2)$, which violates the nefness assumption.

\begin{lemma} \label{lemmaalmostsmooth}
Let $X$ be a K\"ahler manifold that admits an equidimensional fibration \holom{\varphi}{X}{Y}
onto a normal variety $Y$ such that the tangent bundle $T_X$ is $\varphi$-numerically flat.
Then the following holds:
\begin{enumerate}
\item The fibration $\varphi$ is almost smooth. Moreover every set-theoretical fibre $F_{ \red}$ 
is a finite \'etale quotient $T \rightarrow F_{\red}$ of a torus $T$.
\item There exists a finite \'etale cover $X' \rightarrow X$ such that the Stein factorisation
\holom{\varphi'}{X'}{Y'} is smooth in codimension one and the smooth fibres are tori. 
\item If moreover $X$ is projective, there exists a finite \'etale cover $X' \rightarrow X$ such that the Stein factorisation
\holom{\varphi'}{X'}{Y'} is an abelian group scheme. 
\item If $X$ is compact and $\varphi$ has generically constant moduli, there exists a finite \'etale cover $X' \rightarrow X$
such that the Stein factorisation \holom{\varphi'}{X'}{Y'}  is smooth and locally trivial.
If moreover $X$ is projective, then (after finite \'etale cover) one has $X' \simeq Y' \times A$
with $A$ an abelian variety.
\end{enumerate}
\end{lemma}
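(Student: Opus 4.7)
I would prove the four parts in the order stated, with (1) serving as the foundation.

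For (1), the plan is a three-step reduction. First, since $F_{\red}$ is contracted by $\varphi$, the $\varphi$-numerical flatness hypothesis gives that $T_X|_{F_{\red}}$ and $\Omega_X|_{F_{\red}}$ are both numerically flat. Second, show $F_{\red}$ is smooth: on its smooth locus $F_{\text{sm}}$ the conormal sequence $0 \to N^*_{F_{\text{sm}}/X} \to \Omega_X|_{F_{\text{sm}}} \to \Omega_{F_{\text{sm}}} \to 0$ presents $\Omega_{F_{\text{sm}}}$ as a nef quotient of a numerically flat bundle, so $\Omega_{F_{\text{sm}}}$ itself is numerically flat with numerically trivial determinant; by the observation made before the lemma, $F_{\red}$ contains no rational curves, which, together with the nefness of $\Omega_X|_{F_{\red}}$ pulled back to any resolution of $F_{\red}$, rules out the exceptional divisors (necessarily uniruled) of a resolution and forces $F_{\red}$ to be smooth. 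Third, with $F_{\red}$ smooth, $T_{F_{\red}}$ is a saturated subsheaf of the numerically flat bundle $T_X|_{F_{\red}}$ with torsion-free quotient, hence itself numerically flat, and \cite[Main Theorem]{DPS94} yields that $F_{\red}$ is finite \'etale covered by a torus. I expect this smoothness step to be the main obstacle, since the non-reduced structure of $F$ makes standard adjunction delicate.

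For (2), the smooth fibres over the \'etale locus of $\varphi$ are already tori by (1). To resolve multiplicities in codimension one, I would follow Koll\'ar \cite{Kol93} and Nakayama \cite{Nak99b}: let $m$ be a common multiple of the multiplicities of the non-reduced fibres at codimension-one points of $Y$, and use the $m$-torsion section construction on the smooth part of $\varphi$ to produce a finite \'etale cover $X' \to X$ whose Stein factorisation $\varphi': X' \to Y'$ has reduced fibres (hence torus fibres, by (1)) over every codimension-one point. For (3), assuming $X$ projective, I would apply the standard structure theory of projective families of abelian varieties to $\varphi': X' \to Y'$: a relative polarisation descends from an ample class on $X'$, and after a further finite \'etale cover that yields a zero-section, $\varphi'$ acquires the structure of an abelian group scheme, again following Koll\'ar \cite{Kol93}.

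For (4), generically constant moduli together with Fischer--Grauert yield local triviality over some Zariski open $U \subset Y$ with fibre a torus $A$. The monodromy of this smooth fibration lies in the discrete group $\operatorname{Aut}(A)/A$, so a finite \'etale cover reduces the structure group to translations, making the smooth fibration over $U$ a principal $A$-bundle; using the almost-smoothness from (1), this extends over all of $Y'$ to a smooth locally trivial torus bundle. In the projective case, the Kodaira--Spencer map of this bundle vanishes (moduli being constant), and the $H^1(Y', A)$-class of the bundle lies in the group of torsors of an abelian variety; a final finite \'etale cover trivialises this class, yielding the desired splitting $X' \simeq Y' \times A$.
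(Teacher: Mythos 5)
Your overall architecture matches the paper's --- positivity constraints on restricted bundles, a DPS-type smoothness argument, Koll\'ar/Nakayama to kill multiplicities, and Albanese/reducibility for the locally trivial case --- but several of your steps contain gaps, and the logical dependency structure you propose is not the one the paper actually uses.

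\textbf{Statement (1).} Your smoothness argument for $F_{\red}$ does not go through. You assert that the exceptional divisors of a resolution of $F_{\red}$ are ``necessarily uniruled'' and that this, together with the absence of rational curves, forces $F_{\red}$ to be smooth. But exceptional loci of resolutions of normal singularities are not uniruled in general (a cone over an elliptic curve is the standard counterexample), and nothing in your argument rules this out a priori; deducing such uniruledness would require knowing something about the singularities of $F_{\red}$ that you have not established. The paper instead (i) reduces to the case where $Y$ is a curve by restricting to a generic disc, (ii) shows that the fibre $F = \sum a_i F_i$ is irreducible by computing $\omega_{F_1} \sim_\Q \sO_{F_1}(-\sum_{i \geq 2} \frac{a_i}{a_1}(F_i \cap F_1))$ via adjunction and observing that a nef and anti-effective line bundle is trivial, and (iii) invokes \cite[Prop.5.1]{DPS94} (specifically its ``Second case'', which is over a curve, to avoid the gap flagged in \cite{SW04}) to conclude $F_{\red}$ smooth. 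This only yields \emph{almost smoothness in codimension one}. The upgrade to almost smoothness of \emph{every} fibre is a separate step (the paper's Step 3): one takes the foliation $\sF \subset T_X$ defined by the fibres, computes $\det\sF \simeq \sO_X(-K_{X/Y})$ using codimension-one smoothness, and shows the evaluation map $\det\sF \to \bigwedge^{\rk \sF} T_X$ is nowhere zero using \cite[Prop.1.2(12)]{CP91}, whence regularity of $\sF$ by \cite[Lemma 1.20]{DPS94}. Crucially, this step \emph{requires} Statement (2) already; the paper's dependency chain is ``almost smooth in codim 1'' $\Rightarrow$ (2) $\Rightarrow$ (1 in full), not (1) $\Rightarrow$ (2) as in your plan. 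Once $F_{\red}$ is smooth, both you and the paper use numerical flatness to get $c_2(F_{\red})=0$ (via the Chern inequalities of \cite[Thm.2.5]{DPS94}) and then Bieberbach; this part of your (1) is fine.

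\textbf{Statement (2).} Your $m$-torsion-section construction is in the same spirit but not what the paper does. The paper works locally on polydiscs, takes a ramified cyclic cover of the base to make the fibration smooth, deduces $\pi_1(F)\hookrightarrow \pi_1(X)$, and then applies Nakayama's \cite[Thm.7.8]{Nak99b} and \cite[Thm.8.6]{Nak99b} (the K\"ahler analogues of \cite[Thm.6.3]{Kol93}); in the K\"ahler setting you cannot straightforwardly appeal to the projective machinery you have in mind.

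\textbf{Statement (3).} You omit the key final point: \cite[Thm.6.3]{Kol93} only gives that $\varphi'$ is \emph{birational} to an abelian group scheme. The paper then takes the strict transform $Z$ of the zero-section, shows $\varphi'|_Z$ is generically a section so a general fibre $F$ has $F \cdot Z = 1$, and deduces that all fibres are reduced; combined with almost smoothness this makes $\varphi'$ actually smooth. Without this argument, you have not shown $\varphi'$ itself is a group scheme.

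\textbf{Statement (4).} Your monodromy/torsor approach is genuinely different from the paper's. The paper instead uses $q(X') = q(Y') + \dim F$ (via \cite[Prop.6.7]{CP00}), the Albanese maps, and Zariski's main theorem to identify $X'$ with $\Alb(X')\times_{\Alb(Y')} Y'$, then Poincar\'e reducibility for the product decomposition. Your torsor argument is plausible in outline, but the claim that a class in $H^1(Y', A)$ is killed by a finite \'etale cover is not automatic and would need justification (it is not a purely torsion phenomenon); the paper's Albanese route sidesteps this entirely.
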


\begin{remark*}
The statement does not generalise to non-K\"ahler manifolds. In fact there are examples of compact non-K\"ahler surfaces $X$
admitting an elliptic fibration onto $\PP^1$ that is almost smooth with a unique singular fibre. 
Arguing as in \cite[V.13.2]{BHPV04} one sees that one cannot remove the multiple fibre by an \'etale cover $X' \rightarrow X$.
\end{remark*}

\begin{proof} 

{\bf Step 1: $\varphi$ almost smooth in codimension one, i.e. there exists a subvariety $Z \subset Y$
of codimension at least two such that $(X \setminus \fibre{\varphi}{Z}) \rightarrow (Y \setminus Z)$ is almost smooth.}

We argue by contradiction. 
Choosing a generic disc that meets a codimension one component of the $\varphi$-singular locus in a generic point, we reduce the problem to the case where $Y$ is a curve. Let $F$ be a fibre such that the reduction
$F_{\red}$ is not smooth. We decompose the divisor $F=\sum_{i=1}^k a_i F_i$ where the $F_i$ are pairwise distinct prime divisors. Since $F_1$ is contained in a $\varphi$-fibre, the bundle $\Omega_X|_{F_1}$ is numerically flat.
Thus its quotient $\Omega_{F_1}$ is nef, so on the one hand the dualising sheaf $\omega_{F_1}$ is nef. 
On the other hand by adjunction one has $\omega_{F_1} \simeq (\omega_X \otimes \sO_{X}(F_1))|_{F_1}$.
Since $\omega_X|_{F_1}$ and $\sO_{X}(F)|_{F_1}$ are numerically trivial, we see that
$$
\omega_{F_1} \sim_\Q \sO_{F_1}(-\sum_{i=2}^k \frac{a_i}{a_1} (F_i \cap F_1)).
$$
Thus $\omega_{F_1}$ is nef and anti-effective, hence trivial. By connectedness of the fibre, we have $k=1$, i.e.
$F$ is irreducible. Since $T_X$ is $\varphi$-nef, a result of Demailly-Peternell-Schneider 
\cite[Prop.5.1]{DPS94} (see also Remark \ref{remarkgap}) now shows that $F_{\red}$ is smooth, a contradiction. 

Thus $\varphi$ is almost smooth in codimension one, and if $F$ is a fibre such that $F_{\red}$ is smooth,
its normal bundle $N_{F_{\red}/X}$ is numerically flat \cite[Prop.5.1]{DPS94}.
In particular by adjunction $K_{F_{\red}} \equiv 0$ and as we have seen above, the cotangent bundle $\Omega_{F_{\red}}$ is nef.  
The Chern class inequalities \cite[Thm.2.5.]{DPS94}
$$
0 = c_1^2(\Omega_{F_{\red}}) \geq c_2(\Omega_{F_{\red}}) \geq 0
$$
show that $c_2(F_{\red})=0$. Thus a classical result of Bieberbach \cite[Cor.4.15]{Kob87} shows that $F_{\red}$ is a finite \'etale quotient of a torus.

{\bf Step 2: Proof of Statement 2).}
Let $N \subset Y$ be a subvariety of codimension at least two. Since $\varphi$ is equidimensional, $\fibre{\varphi}{N}$ has codimension
at least two. Hence we have an isomorphism of fundamental groups
$\pi_1(X) \simeq \pi_1(X \setminus \fibre{\varphi}{N})$ and any 
\'etale cover $(X \setminus \fibre{\varphi}{N})'  \rightarrow (X \setminus \fibre{\varphi}{N})$ extends to an \'etale cover
$X' \rightarrow X$. Thus by Step 1) we can suppose without loss of generality that we are in the situation of the following 
lemma. 

\begin{lemma} \label{lemmalifting}
Let $\holom{\varphi}{X}{Y}$ be an almost smooth fibration from a K\"ahler manifold $X$ onto a manifold $Y$.
Suppose that $\varphi$ is smooth in the complement of a smooth divisor $D \subset Y$.
Suppose moreover that for every fibre $F$, the set-theoretical fibre
$F_{\red}$ is a finite \'etale quotient $T \rightarrow F_{\red}$ of a torus $T$.
Then there exists a finite \'etale cover $X' \rightarrow X$ such that the Stein factorisation
\holom{\varphi'}{X'}{Y'} is smooth in codimension one and the smooth fibres are tori. 
\end{lemma}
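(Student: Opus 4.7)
The plan is to construct the \'etale cover $X' \to X$ in two stages: first an \'etale cover eliminating the multiple fibres over $D$, then a further \'etale cover trivializing the \'etale-quotient structure of the smooth fibres.

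For the first stage, work locally near a general point $y_0 \in D$ with multiplicity $m$, so that $F_0 := \varphi^{-1}(y_0) = m F_{\red}$. The identifications $\sO_X(F_0)|_{F_{\red}} \simeq \varphi^*\sO_Y(y_0)|_{F_{\red}} \simeq \sO_{F_{\red}}$ together with $\sO_X(F_0) \simeq \sO_X(m F_{\red})$ force $N_{F_{\red}/X}$ to be an $m$-torsion line bundle; by \cite[Prop.5.1]{DPS94} it is moreover numerically flat. Combined with the almost smoothness of $\varphi$, a local analysis in the spirit of Kodaira's treatment of multiple elliptic fibres shows that near $F_{\red}$ the fibration is a cyclic quotient: there exists a local \'etale cover $\widetilde{F} \times \Delta' \to X$ of degree $m$, where $\widetilde{F}$ is a torus \'etale-covering $F_{\red}$ (the cover determined by the torsion of $N_{F_{\red}/X}$), $\Delta'$ is a disc, and $\Z/m$ acts freely by translation on $\widetilde{F}$ and rotation on $\Delta'$. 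This local cover simultaneously removes the multiplicity and lifts $F_{\red}$ to the torus $\widetilde{F}$.

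To globalize, observe that $\sO_X(F_{\red})^{\otimes m} \simeq \varphi^*\sO_Y(D)$ on a neighbourhood of $\varphi^{-1}(D)$, yielding local $m$-th roots of $\varphi^*\sO_Y(D)$. After passing to an auxiliary finite \'etale cover of $X$ to kill a potential cohomological obstruction in $H^2(X,\mu_m)$, these local roots patch to a global line bundle, and the associated degree-$m$ cyclic cover $X_1 \to X$ is \'etale: in the local model the branch divisor has multiplicity $m$ matching the cover degree. The Stein factorisation $X_1 \to Y_1 \to Y$ has $Y_1 \to Y$ branched along $D$, and $X_1 \to Y_1$ is smooth in codimension one. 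For the second stage, the smooth fibres of $X_1 \to Y_1$ remain \'etale quotients $T/G$ of tori with $G$ finite; a further finite \'etale cover $X' \to X_1$ (corresponding to a finite-index subgroup of $\pi_1(X_1)$ whose existence is controlled by the finiteness of $G$ and the constrained monodromy of the smooth family) has smooth fibres equal to the torus $T$.

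The main obstacle is the globalization in the first stage: one must patch the local cyclic quotient descriptions consistently across $D$ (dealing with possibly different multiplicities on different connected components of $D$) and eliminate the obstruction to the global existence of an $m$-th root line bundle, possibly via intermediate \'etale covers. A secondary technical point is verifying that the Stein factorisation of the combined cover obtained from Stages 1 and 2 preserves the required smoothness-in-codimension-one condition with torus fibres.
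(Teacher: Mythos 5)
Your proposal correctly identifies the key local phenomena (local cyclic quotient structure around a multiple fibre, torsion of the normal bundle, the passage to the torus covering the \'etale quotient), but it does not give a valid proof: the crucial globalisation step is precisely where the difficulty lies, and your treatment of it is wrong.

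The assertion that ``passing to an auxiliary finite \'etale cover of $X$ kills a potential cohomological obstruction in $H^2(X,\mu_m)$'' is not justified and is not true in general. The obstruction to extracting a global $m$-th root of $\varphi^*\sO_Y(D)$ lives in $H^2(X,\mu_m)$ via the Kummer sequence, but a finite \'etale cover $X'\to X$ only kills the classes coming from $\pi_1(X)$; general classes in $H^2(X,\mu_m)$ need not restrict to zero on any finite \'etale cover. Moreover the paper itself records (in the remark preceding Lemma \ref{lemmalifting}) that the statement fails for non-K\"ahler surfaces: one can exhibit an almost smooth elliptic fibration over $\PP^1$ with a single multiple fibre which cannot be removed by any \'etale cover. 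Your argument never uses the K\"ahler hypothesis, so if it were correct it would contradict this. The K\"ahler assumption must enter, and in the paper it does so through the machinery of Koll\'ar and Nakayama on torus fibrations, which is absent from your proposal. Your ``second stage'' -- choosing a finite-index subgroup of $\pi_1(X_1)$ to undo the \'etale-quotient structure of the fibres compatibly with monodromy -- is likewise asserted without proof; this is again exactly the content of the Koll\'ar--Nakayama theory.

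The paper's route is quite different and more efficient: it uses the local model (analogous to your first stage) only to deduce that $\pi_1(F)\to\pi_1(\fibre{\varphi}{\Delta})$ is injective for small polydiscs $\Delta$, then invokes Nakayama's theorems to conclude first that $\pi_1(F)\to\pi_1(X)$ is injective, and second that some finite \'etale cover of $X$ carries a Stein factorisation bimeromorphic to a smooth torus fibration. Both the removal of multiplicities and the lifting of fibres to tori are produced by one application of this theorem, with the global consistency issues you struggle with packaged into that result. If you want to salvage a direct approach you would need, at minimum, to exhibit the obstruction class as lying in the image of $H^2(\pi_1(X),\mu_m)$, which is where the fibre group structure and the K\"ahler/projectivity assumptions must be brought to bear.
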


\begin{remark*}
This result is certainly well-known to experts. In fact the fibration being almost smooth, 
the local monodromies of the variation of Hodge structures 
around $D$ are finite. The existence of the cover $X' \rightarrow X$ 
then follows analogously to the proof of \cite[Thm.6.3]{Kol93}. 
For the convenience of the reader we follow an argument indicated by Noburo Nakayama.
\end{remark*}

\begin{proof}[Proof of Lemma \ref{lemmalifting}]
We can cover $Y$ by polydiscs $\Delta$ of dimension $m:=\dim Y$ such that 
$$
\Delta \cap D = \{ (w_1, \ldots, w_m) \in \Delta \ | \ w_m = 0 \}
$$
and for $y  \in \Delta \cap D$ and $x \in \fibre{\varphi}{\Delta}$ there exist local coordinates $z_1, \ldots, z_n$ around $x$
such that $\varphi$ is given by $(z_1, \ldots, z_n) \rightarrow (z_1, \ldots, z_{m-1}, z_m^k)$, where $k$
is the multiplicity of the fibre $F$. Let $\Delta' \rightarrow \Delta$ be a finite map from some $m$-dimensional disc $\Delta'$
that ramifies exactly along $\Delta \cap D$ with multiplicity $k$.
Let $X_{\Delta'}$ be the normalisation 
of the fibre product $\Delta' \times_\Delta X$, then a local computation shows that
$X_{\Delta'} \rightarrow \fibre{\varphi}{\Delta} \subset X$ is \'etale
and the fibration $X_{\Delta'} \rightarrow \Delta'$ is smooth. Since $\Delta'$ retracts onto a point
we have an isomorphism $\pi_1(F) \simeq \pi_1(X_{\Delta'})$, where $F$ is any fibre.
The cover $X_{\Delta'} \rightarrow \fibre{\varphi}{\Delta}$ being \'etale and surjective this shows that we have an injection
$$
\pi_1(F) \hookrightarrow \pi_1(\fibre{\varphi}{\Delta}).
$$
By \cite[Thm.7.8]{Nak99b} this implies that $\varphi$ 
is bimeromorphically equivalent to a fibration $\holom{\tilde \varphi}{\tilde X}{\tilde Y}$
which becomes smooth after a finite \'etale cover. As we have just seen for such a
fibration the natural morphism $\pi_1(\tilde F) \rightarrow \pi_1(\tilde X)$ is injective.
Since the fibrations $\varphi$ and $\tilde \varphi$ are bimeromorphic, this shows that
$$
\pi_1(F) \rightarrow \pi_1(X)
$$ 
is injective. Thus by \cite[Thm.8.6]{Nak99b} (which is the analogue of \cite[Thm.6.3]{Kol93} for the K\"ahler case) there
exists a finite \'etale cover $X' \rightarrow X$ such that the Stein factorisation 
$\holom{\varphi'}{X'}{Y'}$ is bimeromorphically equivalent to a smooth torus fibration 
$\holom{\tilde \varphi}{\tilde X}{\tilde Y}$. 
Up to blowing up $\tilde Y$ and excluding the image of the exceptional locus we can suppose without loss of generality that $\tilde Y= Y'$.  Since in codimension one the $\varphi'$-fibres
do not contain any rational curves, there exists a codimension two set $B \subset Y'$
such that the restriction of the 
bimeromorphic map \merom{\mu}{\tilde X}{X'} to $\tilde X \setminus \fibre{\tilde \varphi}{B}$ is a morphism and an isomorphism onto its image. Since $\tilde \varphi$ is smooth, this proves the statement.
\end{proof}

{\bf Step 3: $\varphi$ is almost smooth.} This property does not change under finite \'etale cover, so we can assume
by Step 2) that $\varphi$ is smooth in codimension one. Moreover $\varphi$ is equidimensional, so
the relative cotangent sheaf $\Omega_{X/Y}$ is locally free in codimension one and has determinant
$\sO_X(K_{X/Y})$. We consider the foliation $\sF \subset T_X$ defined by the reduction of every
$\varphi$-fibre $F$, i.e. on the non-singular locus $F_{\red, \nons} \subset F_{\red}$
we have
$$
(*) \qquad T_{F_{\red, \nons}} = \sF|_{F_{\red, \nons}}.
$$
Since $\varphi$ is smooth in codimension one, the sheaves $T_{X/Y}:=\Omega_{X/Y}^*$ and $\sF$
coincide in codimension one, hence $\det \sF \simeq \sO_X(-K_{X/Y})$. We claim that the foliation $\sF$
is regular which obviously implies that the reduction of every fibre is smooth.

{\em Proof of the claim.}
The inclusion $\sF \subset T_X$ 
induces a map $\alpha: \det \sF \rightarrow \bigwedge^{\rk \sF} T_X$ and
by \cite[Lemma 1.20]{DPS94} it is sufficient to show that $\alpha$ has rank one in every point.
By $(*)$ the restriction of $\alpha$ 
to $F_{\red, \nons}$ identifies to the map induced by $T_{F_{\red, \nons}} \subset T_X|_{F_{\red, \nons}}$,
hence  $\alpha|_{F_{\red}}$ is not zero on any irreducible component of $F_{\red}$. 
Since $\det \sF \simeq  \sO_X(-K_{X/Y})$ and $\bigwedge^{\rk \sF} T_X$ are $\varphi$-numerically flat, 
we know by \cite[Prop.1.2(12)]{CP91} that 
$\alpha|_{F_{\red}}$ does not vanish in any point of $F_{\red}$. 
Thus $\alpha$ does not vanish in any point of $X$.

{\bf Step 4: Proof of Statement 3).}
By what precedes we know that $\varphi$ is almost smooth and (after finite \'etale cover) smooth
in codimension one. Since $X$ is projective we know by \cite[Thm.6.3]{Kol93}
that (after finite \'etale cover) the fibration $\varphi$ is birational to an abelian group scheme
$\tilde \varphi: \tilde X \rightarrow \tilde Y$. 
Since $\tilde \varphi$ is a group scheme, there exists a section $s: \tilde Y \rightarrow \tilde X$.
Let $Z$ be the strict transform of $s(\tilde Y)$ under the birational map
$\tilde X \dashrightarrow X$. Then $\varphi|_Z: Z \rightarrow Y$ is birational, 
i.e. $Z$ is generically a section of $\varphi$.
In particular for a general fibre $F$ we have $F \cdot Z=1$. Since for any fibre $F_0$ we
have $[F_0]=m [F]$ with $m$ the multiplicity of the fibre $F_0$, we see that all the fibres are reduced.
Thus the almost smooth fibration $\varphi$ is smooth. 

{\bf Step 5: Proof of Statement 4).}
By Statements 1) and 2) we know that  (after finite \'etale cover) the almost smooth fibration $\varphi$ 
has tori as general fibres. If $\varphi$ has generically constant moduli,
we have (after finite \'etale cover) that $q(X)=q(Y)+\dim F$ \cite[Prop.6.7]{CP00}.
Since the reduction of every $\varphi$-fibre is an \'etale quotient of a torus, the
Albanese map $\alpha_X: X \rightarrow \Alb(X)$ maps
each $\varphi$-fibre isomorphically onto a fibre of the locally trivial fibration $\varphi_*: \Alb(X) \rightarrow \Alb(Y)$.
By the universal property of the fibre product we have a commutative diagram
$$
\xymatrix{
\Alb(X) \times_{\Alb(Y)} Y   \ar[rd]_\psi 
& X  \ar[l] \ar[r]^{\alpha_X} \ar[d]_{\varphi} & \Alb(X) \ar[d]^{\varphi_*}
\\
& Y \ar[r]^{\alpha_Y}  & \Alb(Y)
}
$$
The map $\psi$ is the pull-back of $\varphi_*$ by the fibre product, so it is a locally trivial fibration. The base $Y$ is normal,
so the total space $\Alb(X) \times_{\Alb(Y)} Y$ is normal.
By what precedes the morphism $X \rightarrow \Alb(X) \times_{\Alb(Y)} Y$ is bimeromorphic and finite, hence
an isomorphism by Zariski's main theorem. In particular $\varphi=\psi$ is smooth and locally trivial.

If $X$ is projective, the Albanese torus is an abelian variety.
Thus we know by Poincar\'e's reducibility theorem \cite[Thm.5.3.5]{BL04} that (after finite \'etale cover) one has $\Alb(X) \simeq \Alb(Y) \times F$, hence the fibre product $\Alb(X) \times_{\Alb(Y)} Y$ is isomorphic to $Y \times F$.
\end{proof}


As a corollary of the proof we obtain the following statement.

\begin{corollary} 
Let $X$ be a K\"ahler manifold that admits an equidimensional almost smooth fibration \holom{\varphi}{X}{Y}
onto a normal variety $Y$ such that the general fibre $F$ is a finite \'etale quotient $T \rightarrow F$ 
of a torus $T$. Then there exists a finite \'etale cover $X' \rightarrow X$ such that the Stein factorisation
\holom{\varphi'}{X'}{Y'} is smooth in codimension one and the smooth fibres are tori. 
\end{corollary}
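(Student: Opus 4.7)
The plan is to verify the hypotheses of Lemma \ref{lemmalifting} on a suitable Zariski open subset of $X$ and then apply it, following Steps 1 and 2 of the proof of Lemma \ref{lemmaalmostsmooth}.

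\textbf{Reduction.} Equidimensionality of $\varphi$ gives $\pi_1(X)\simeq\pi_1(X\setminus\varphi^{-1}(N))$ for every codimension-two closed subset $N\subset Y$, so finite \'etale covers extend under such removals. After discarding a suitable $N$ I may therefore assume $Y$ is smooth and the locus $D\subset Y$ over which $\varphi$ fails to be smooth is a smooth divisor. This is exactly the reduction performed in Step 2 of the proof of Lemma \ref{lemmaalmostsmooth}.

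\textbf{All set-theoretic fibres are \'etale quotients of tori.} Fix $y\in D$ and write $F:=\varphi^{-1}(y)=mF_{\red}$. Pick a polydisc $\Delta\ni y$ with $\Delta\cap D=\{w_m=0\}$ and let $\Delta'\to\Delta$ be the cyclic cover of degree $m$ ramified along $\Delta\cap D$. Let $X_{\Delta'}$ be the normalisation of $X\times_\Delta\Delta'$; as in the proof of Lemma \ref{lemmalifting}, the map $X_{\Delta'}\to\varphi^{-1}(\Delta)$ is \'etale and $X_{\Delta'}\to\Delta'$ is smooth and proper. A general fibre of the latter is an \'etale cover of a general $\varphi$-fibre, hence itself a finite \'etale quotient of a torus. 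Since $\Delta'$ is contractible, the smooth proper family $X_{\Delta'}\to\Delta'$ has constant fundamental group, so after a further \'etale cover I may assume every general fibre is a complex torus. A smooth proper K\"ahler deformation of a complex torus over a disc is again a family of complex tori (Kuranishi theory for complex tori), so every fibre is a torus; descending by the deck group yields that every fibre of $X_{\Delta'}\to\Delta'$ is a finite \'etale quotient of a torus. The central fibre is an \'etale cover of $F_{\red}$, so $F_{\red}$ is one as well.

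\textbf{Conclusion.} The hypotheses of Lemma \ref{lemmalifting} now hold on $X\setminus\varphi^{-1}(N)$; applying it produces a finite \'etale cover of this open set whose Stein factorisation is smooth in codimension one with torus fibres, and by the reduction step this cover extends to a finite \'etale cover of $X$, as required.

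\textbf{Main obstacle.} The crux is the middle step. Without the $\varphi$-numerical flatness of $T_X$ that was available in Lemma \ref{lemmaalmostsmooth}, the adjunction-and-Bieberbach route to controlling special fibres is unavailable and must be replaced by the smoothing cyclic base-change of Lemma \ref{lemmalifting} combined with the deformation rigidity of complex tori in smooth K\"ahler families; the reader must be convinced that this rigidity genuinely propagates the \emph{finite \'etale quotient of a torus} property from general to special set-theoretic fibres.
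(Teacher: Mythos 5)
Your reduction and overall plan are correct, and they match what the paper intends by ``corollary of the proof'': pass to a codimension-two complement where $Y$ is smooth and the singular locus of $\varphi$ is a smooth divisor, then invoke Lemma \ref{lemmalifting}. The place where you (rightly) see a potential gap is that Lemma \ref{lemmalifting}, as stated, asks that \emph{every} set-theoretic fibre be an \'etale quotient of a torus, whereas the Corollary only supplies this for the general fibre. Your idea of propagating the property to the special fibres via the cyclic base change $X_{\Delta'}\to\Delta'$ is the right one, and the conclusion is correct.

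Two points deserve tightening. First, a small one: the general fibre of $X_{\Delta'}\to\Delta'$ is actually \emph{isomorphic} to a general $\varphi$-fibre (the fibre product with $\Delta'\to\Delta$ is trivial away from the ramification), not merely an \'etale cover of it; this only helps you. Second, and more substantively, what you need is a \emph{degeneration} statement --- a smooth proper K\"ahler family over a disc whose general fibres are complex tori has its central fibre a torus --- and the phrase ``Kuranishi theory for complex tori'' points at the deformation direction (versality around a given torus), which does not by itself control the limit fibre. The degeneration statement is true, but it needs an argument: e.g.\ the Hodge numbers and $c_1$ are constant in a smooth K\"ahler family, so the central fibre has $h^{1,0}=\dim$ and $c_1=0$ in $H^2(\cdot,\bR)$; the relative Albanese map is then a proper bimeromorphic morphism to a family of tori, and since $c_1$ vanishes, the exceptional divisor on the central fibre is numerically trivial hence zero, so the Albanese map of the central fibre is an isomorphism. (Equivalently one can appeal to the Beauville--Bogomolov decomposition.) With some version of that justification inserted, your proof is complete.

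Finally, it is worth observing that the ``every fibre'' hypothesis of Lemma \ref{lemmalifting} is not actually used in its proof: the cyclic cover construction and the resulting $\pi_1$-injectivity only require almost smoothness and the local normal form, and the appeals to Nakayama's theorems concern the general fibre. So a shorter route to the Corollary is simply to note that the proof of Lemma \ref{lemmalifting} goes through verbatim under the Corollary's weaker hypothesis; your deformation detour is a valid but optional way to restore the hypothesis as literally stated.
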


Note that by \cite[Lemme 2.3]{Cla10} for a torus fibration that is smooth in codimension one
the map $\pi_1(F) \rightarrow \pi_1(X)$ is injective. Thus in the situation above
$\varphi$ has generically large fundamental group along the general fibre \cite[Defn.6.1]{Kol93}, i.e.
the statement is a natural inverse to \cite[Thm.6.3]{Kol93}.

We can also deduce a simplified version of \cite[Prop.5.1]{DPS94}:

\begin{corollary} \label{corollarydps}
Let $X$ be a quasi-projective manifold that admits a fibration \holom{\varphi}{X}{Y}
onto a normal variety $Y$ such that the tangent bundle $T_X$ is $\varphi$-nef.
Then $\varphi$ is equidimensional and almost smooth.
If $X$ is projective, there exists a finite \'etale cover $X' \rightarrow X$ such that the Stein factorisation \holom{\varphi'}{X'}{Y'}  is smooth.
\end{corollary}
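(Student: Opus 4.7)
The plan is to adapt the proof of \cite[Prop.~5.1]{DPS94} to this relative setting, in the spirit of Lemma~\ref{lemmaalmostsmooth}. Three things must be checked: equidimensionality of $\varphi$, almost smoothness, and---in the projective case---smoothness of the Stein factorisation after a finite \'etale cover.

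I would begin with equidimensionality, which I expect to be the main obstacle. Argue by contradiction: suppose some fibre has an irreducible component $F_0$ with $\dim F_0 > \dim X - \dim Y$. Since $X$ is quasi-projective, cutting $F_0$ by general members of $|mH|$ for an ample divisor $H$ produces a large supply of contracted curves $C \subset F_0$, on each of which $T_X|_C$ is nef and hence $-K_X \cdot C \geq 0$. The $\varphi$-nefness of $T_X$ along $F_0$, combined with the fact that $F_0$ moves inside its own fibre (as it has more than the expected dimension), yields a contradiction in the spirit of \cite[Prop.~5.1]{DPS94}. The quasi-projective assumption enters essentially here, both to produce the requisite curves and to invoke these projective techniques.

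With equidimensionality in hand, I would prove almost smoothness by imitating Step~1 of the proof of Lemma~\ref{lemmaalmostsmooth}: one reduces to $Y$ being a curve so that every fibre is a divisor in $X$, and for an irreducible component $F_1$ of a fibre $F = \sum_i a_i F_i$, the identity $\sO_X(F)|_F \equiv 0$ and adjunction yield
\[
\omega_{F_1} \sim_{\Q} \sO_{F_1}\bigl(-\sum_{i \neq 1} (a_i/a_1)(F_i \cap F_1)\bigr).
\]
Although $\omega_X|_{F_1}$ is now only anti-nef rather than numerically trivial (since we only assume $\varphi$-nefness, not $\varphi$-numerical flatness), a direct application of \cite[Prop.~5.1]{DPS94} together with this adjunction equation nevertheless forces $F$ to be irreducible and $F_{\red}$ to be smooth. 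In the projective case, the almost smooth fibration has finite local monodromies around its multiple fibres, so a Koll\'ar-type argument analogous to Lemma~\ref{lemmalifting} (with general fibres with nef tangent bundle playing the role of tori) produces a finite \'etale cover $X' \to X$ whose Stein factorisation $\varphi':X'\to Y'$ is smooth.
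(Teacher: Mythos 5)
Your proposal misses the paper's central reduction step and substitutes for it arguments that have a known gap. The paper does not attempt to prove equidimensionality or almost smoothness directly from $\varphi$-nefness of $T_X$. Instead it first \emph{reduces to the case where $T_X$ is $\varphi$-numerically flat}: if $K_X$ is not $\varphi$-nef, the relative contraction theorem produces an elementary Mori contraction $\mu:X\to Z$ factoring $\varphi$; by \cite[Thm.4.4]{SW04} the map $\mu$ and the target $Z$ are smooth, so one may replace $(X,\varphi)$ by $(Z,\psi)$ and iterate. Once $K_X$ is $\varphi$-nef as well, $T_X$ is $\varphi$-numerically flat, so $\Omega_X$ is $\varphi$-nef; the fibres then contain no rational curves, and Kawamata's theorem \cite[Thm.2]{Kaw91} immediately gives equidimensionality. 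At that point Lemma~\ref{lemmaalmostsmooth},~1) and 3) finish everything, including the projective statement (the fibres are finite \'etale torus quotients, so Koll\'ar's theorem applies).

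Your proposal breaks at several points precisely because it skips this reduction. For equidimensionality, the argument ``cut by ample divisors, $F_0$ moves inside its own fibre, contradiction in the spirit of \cite[Prop.5.1]{DPS94}'' is not a valid argument: a component of a fibre has no reason to move inside that fibre, $-K_X\cdot C\geq 0$ does not by itself produce a contradiction, and Remark~\ref{remarkgap} explicitly warns that the ``First case'' of \cite[Prop.5.1]{DPS94} (the case of positive-dimensional fibres of unexpected dimension) has a gap --- the very case you are trying to imitate. For almost smoothness, Step~1 of Lemma~\ref{lemmaalmostsmooth} needs $\Omega_X|_{F_1}$ \emph{nef} in order to conclude $\omega_{F_1}$ is nef, whereas under the hypothesis of the corollary only $T_X|_{F_1}$ is nef, so $\omega_{F_1}$ is anti-nef plus anti-effective and one cannot conclude it is trivial; you acknowledge this but do not resolve it. Finally, your argument for smoothness of the Stein factorisation in the projective case invokes finite local monodromy ``analogous to Lemma~\ref{lemmalifting}'', but there one knows the fibres are \'etale torus quotients; with only nef tangent bundle along the fibres you have no such structure, hence no control on the monodromy and no way to invoke Koll\'ar's theorem \cite[Thm.6.3]{Kol93}. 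All three difficulties evaporate once one has performed the reduction to the numerically flat case, which is the one genuinely new idea in the paper's proof and the one your proposal lacks.
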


\begin{remark} \label{remarkgap}
Sol\'a Conde and Wi\'sniewski \cite[Ch.4.2]{SW04} point out that the proof of the ``First case'' of \cite[Prop.5.1]{DPS94} has a gap and give a completely different proof under the additional condition that $\varphi$ is a Mori contraction
\cite[Thm.4.4]{SW04}.
Note that we used \cite[Prop.5.1]{DPS94} in the proof of Lemma \ref{lemmaalmostsmooth}, but only
for a fibration over a curve which corresponds to the ``Second case'' of their proof.
\end{remark}

\begin{proof}
If $K_X$ is not $\varphi$-nef, we know by the relative contraction theorem \cite[Thm.3.25]{KM98} that there exists 
an elementary Mori contraction $\holom{\mu}{X}{Z}$ that factors $\varphi$, i.e.
there exists a fibration $\psi: Z \rightarrow Y$ such that $\varphi=\psi \circ \mu$. 
Applying \cite[Thm.4.4]{SW04} to $\mu$ we see that $\mu$ and $Z$ are smooth, in
particular $T_Z$ is $\psi$-nef. Since a composition of equidimensional and almost smooth fibrations is equidimensional and 
almost smooth, we can argue inductively and suppose without loss of generality that $K_X$ is $\varphi$-nef.
Since $T_X$ is also $\varphi$-nef, it is $\varphi$-numerically flat. Hence
$\Omega_X$ is also $\varphi$-nef, so the $\varphi$-fibres do not contain any rational curves.
By a theorem of Kawamata \cite[Thm.2]{Kaw91} this shows that $\varphi$ is equidimensional.
Conclude by Lemma \ref{lemmaalmostsmooth},1) and 3). 
\end{proof}

\section{Proofs of the main statements}

\begin{proof}[Proof of Theorem \ref{theoremmain}.]
Since $K_X$ is semiample we can consider the Iitaka fibration $\varphi: X \rightarrow Y$.
Note that the anticanonical divisor $-K_X$ is $\varphi$-numerically trivial.
Since $\Omega_X$ is nef, hence $\varphi$-nef, the tangent bundle $T_X$
is $\varphi$-numerically flat. By Corollary \ref{corollarydps} the fibration $\varphi$ is equidimensional. 
We conclude by Lemma \ref{lemmaalmostsmooth},3) that there exists a finite \'etale cover
such that the Iitaka fibration is an abelian group scheme. 
By \cite[5.9.1]{Kol93} the projective manifold $Y$ is of general type, so in order to see that
$K_Y$ is ample it is sufficient to show that $Y$ does not contain any rational curves\footnote{
This is a well-known consequence of cone theorem, base-point free theorem and \cite[Thm.2]{Kaw91}.}.
Yet the abelian group scheme $X \rightarrow Y$ has a section, so any rational curve $\PP^1 \rightarrow Y$
would lift to $X$. This is excluded by the nefness of $\Omega_X$.
\end{proof}

\begin{proof}[Proof of Theorem \ref{theoremsemiample}.]
By Theorem \ref{theoremmain} we can suppose (after finite \'etale cover) that the Iitaka fibration $\varphi: X \rightarrow
Y$ is smooth with abelian fibres. Let $F=\fibre{\varphi}{y}$ be any smooth fibre, then we have an exact sequence
$$
0 \rightarrow (\varphi^* \Omega_Y)|_F \rightarrow \Omega_X|_F \rightarrow \Omega_F \simeq \sO_F^{\oplus \dim F}
\rightarrow 0.
$$
Since $\Omega_X|_F$ is semiample and $\det \Omega_F$ is trivial we know by \cite[Cor.4]{Fuj92} that 
the exact sequence splits. In particular the Kodaira spencer map is zero in $y$. Since this holds for all $y$
we see that $\varphi$ has constant moduli. Conclude by Lemma \ref{lemmaalmostsmooth},4).
\end{proof}

Before we can prove Theorem \ref{theoremkaehler} we need a technical lemma which
is a first step towards a generalisation of \cite[Thm.2]{Kaw91} to the K\"ahler case.

\begin{lemma} \label{lemmakawamatakaehler}
Let $X$ be a compact K\"ahler threefold and let $\holom{\varphi}{X}{S}$ be 
a fibration onto a projective surface such that $-K_X$ is $\varphi$-nef. Let $D \subset X$ be a divisor that is contracted
by $\varphi$. Then $D$ is uniruled. 
\end{lemma}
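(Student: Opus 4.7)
The plan is to prove that $K_{D}$ is not pseudo-effective; by the Enriques--Kodaira classification (a compact K\"ahler surface with non-pseudo-effective canonical class has Kodaira dimension $-\infty$, hence is uniruled), this will give the conclusion.

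First observe that, because $\dim X - \dim S = 1$ and $D$ is contracted by $\varphi$, the image $\varphi(D)$ must be a single point $s \in S$, and $D$ is an irreducible component of the two-dimensional fibre $F = \varphi^{-1}(s)$: otherwise $\varphi^{-1}(\varphi(D))$ would have dimension at least three, forcing $\varphi(D) = S$, a contradiction.

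Since $S$ is projective, choose a smooth curve $B \subset S$ through $s$ and write
\[
\varphi^{*} B = aD + E, \qquad a \geq 1,
\]
where $E$ is effective and does not contain $D$ as a component. As $\varphi|_D$ factors through the point $s$, the line bundle $\varphi^{*}\sO_S(B)|_D$ is trivial, which gives
\[
D|_D \equiv -\tfrac{1}{a}\, E|_D
\]
on $D$, so $D|_D$ is $\Q$-anti-effective. Moreover $-K_X|_D$ is nef because every curve in $D$ lies in a $\varphi$-fibre and $-K_X$ is $\varphi$-nef. Adjunction now yields
\[
-K_D \;=\; -K_X|_D \,+\, \tfrac{1}{a}\, E|_D,
\]
a sum of a nef class and a $\Q$-effective class, hence $-K_D$ is $\Q$-pseudo-effective on $D$.

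The final step is to rule out $-K_D \equiv 0$: were it numerically trivial, then $K_X|_D \equiv \tfrac{1}{a}\, E|_D$ would be simultaneously $\Q$-effective and anti-nef, making $E|_D$ a contractible negative-definite configuration of curves on $D$. The idea is to contradict this by producing a movable curve in $D$ of strictly positive degree against $E|_D$. A natural candidate is the trace on $D$ of a horizontal component of the divisor $\varphi^{-1}(B)$, which sweeps out $D$ as $B$ varies in a suitable family of curves through $s$. I expect this non-triviality step to be the main obstacle, requiring a careful intersection-theoretic analysis of $\varphi^{-1}(B)$ in the threefold $X$ and of its trace on $D$; once established, $K_D$ is not pseudo-effective and $D$ is uniruled by the criterion recalled above.
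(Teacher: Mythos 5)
Your reduction is exactly the one the paper uses: write $\varphi^*B = aD + E$ for a curve $B$ through $s = \varphi(D)$, use that $\varphi^*B|_D \equiv 0$ to get $D|_D \equiv -\tfrac1a E|_D$, and feed this into adjunction $K_D = (K_X + D)|_D$. What you then flag as ``the main obstacle'' --- ruling out $-K_D \equiv 0$ by producing a movable curve on $D$ meeting $E|_D$ positively --- is unnecessarily difficult, and this is where your argument is genuinely incomplete. The paper avoids it by the following two observations, which close the gap immediately: (i) the $1$-cycle $E \cap D$ is effective and \emph{nonzero}, because $\varphi^{-1}(B)$ is connected (connected fibres over an irreducible $B$), has dimension two, and surjects onto $B$, so $\mathrm{supp}(E) \neq \emptyset$ and it must meet $D$ in a curve; and (ii) one should pair everything against a fixed K\"ahler form $\alpha$ on $X$ rather than seek a movable curve. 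Then $E|_D \cdot \alpha > 0$, hence $D|_D \cdot \alpha = -\tfrac1a E|_D \cdot \alpha < 0$; combined with $K_X|_D \cdot \alpha \le 0$ (from $\varphi$-nefness of $-K_X$) this gives $\omega_D \cdot \alpha|_D < 0$ outright, so $\omega_D$ is not pseudoeffective. There is no need for the detour through ``$-K_D$ pseudoeffective and not numerically trivial.''

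A second, smaller gap: you invoke the Enriques--Kodaira classification as if $D$ were a smooth compact K\"ahler surface, but $D$ is only a (Gorenstein, since Cartier in a smooth $3$-fold) surface with possibly bad singularities. The implication ``$\omega_D$ not pseudoeffective $\Rightarrow$ $D$ uniruled'' does hold in this generality, but it requires passing to a desingularisation and comparing canonical classes; the paper acknowledges this explicitly and refers to the argument in \cite[Lemma 4.2]{a16}. You should do the same rather than apply the smooth classification directly.
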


\begin{remark*}
The proof is based on the fact that a compact K\"ahler surface $D$ with Gorenstein singularities
is uniruled if the canonical sheaf $\omega_D$ is not pseudoeffective. This is well-known if $D$ is smooth
and standard arguments (cf. the proof of \cite[Lemma 4.2]{a16}) allow to generalise to singular $D$.
Note that for projective manifolds the implication 
$$
K_D \ \mbox{not pseudoeffective} \ \Rightarrow \  D \ \mbox{uniruled} 
$$
is a famous theorem \cite{BDPP04}, but for K\"ahler manifolds this is only known up to dimension three \cite{Bru06}.
\end{remark*}

\begin{proof}
We fix a K\"ahler form $\alpha$ on $X$.
Let $H$ be an effective divisor passing through $\varphi(D)$, then
we can write $\varphi^* H=H'+mD$ with $m \in \N$ and $D \not\subset \mbox{supp} H'$ but $D \cap H' \neq 0$.
Since $\varphi^* H \cdot D = 0$ we have
\[
\alpha \cdot (\varphi^* H)^2 = \alpha \cdot \varphi^* H \cdot (H' + mD) = \alpha \cdot (H')^2 + \alpha \cdot H' \cdot mD,
\]
and developing the left hand side implies $\alpha \cdot H' \cdot mD = - \alpha \cdot m^2 D^2$.
Since $H' \cap D$ is an effective non-zero cycle, we see that $\alpha \cdot D^2<0$.
By the adjunction formula we have $\omega_D \simeq \sO_D(K_X+D)$, so our computation shows that
$$
\omega_D \cdot \alpha|_D = (K_X+D) \cdot D \cdot \alpha < 0.
$$
Therefore $\omega_D$ is not pseudoeffective, hence $D$ is uniruled.
\end{proof}

\begin{proof}[Proof of Theorem \ref{theoremkaehler}.]
Since $K_X$ is nef and $\dim X \leq 3$, it is semiample \cite[Thm.1]{Pet01}, \cite{DP03}.
Let $\varphi: X \rightarrow Y$ be the Iitaka fibration, then the anticanonical divisor $-K_X$ is $\varphi$-trivial.
Since $\Omega_X$ is nef, hence $\varphi$-nef and $-K_X$ is $\varphi$-trivial, the tangent bundle $T_X$
is $\varphi$-trivial. If $\dim Y=1$ we conclude by Lemma \ref{lemmaalmostsmooth},2).
The cases $\dim Y=0$ or $3$ being trivial, we are left with case $\dim Y=2$:

By Lemma \ref{lemmakawamatakaehler}, the fibration $\varphi$ is equidimensional.
Thus it is almost smooth and (after finite \'etale cover) smooth in codimension one by Lemma \ref{lemmaalmostsmooth},2). Since every complete family of elliptic curves is isotrivial,
$\varphi$ has generically constant moduli. We conclude by Lemma \ref{lemmaalmostsmooth},4).
\end{proof}

\end{document}